\newtheorem{theorem}{Theorem}
\newtheorem{lemma}[theorem]{Lemma}
\newcounter{claims}
\newenvironment{claims}{\refstepcounter{claims}\par\medskip\noindent%
 {{\bf (\theclaims)}~~}}{\par\medskip}
\newcommand{\dom}{\mbox{dom}}
\newcommand{\wcol}{\mbox{wcol}}
\newcommand{\col}{\mbox{col}}
\newcommand{\adm}{\mbox{adm}}
\newcommand{\sd}{\mbox{sd}}
\newcommand{\GG}{\cal G}
\title{Constant-factor approximation of domination number in sparse graphs}
\author{Zden\v{e}k Dvo\v{r}\'ak\thanks{Charles University, Prague, Czech Republic.
E-mail: {\tt rakdver@kam.mff.cuni.cz}.
Supported by Institute for Theoretical Computer Science (ITI), project 1M0021620808 of Ministry of Education of Czech Republic.
The work leading to this invention has received funding from the European
Research Council under the European Union's Seventh Framework Programme
(FP7/2007-2013)/ERC grant agreement no. 259385.}}
\date{}
\begin{document}
\maketitle

\begin{abstract}
The \emph{$k$-domination number} of a graph is the minimum size of a set $X$ such that every vertex of $G$ is in distance at most $k$ from $X$.
We give a linear time constant-factor approximation algorithm for $k$-domination number in classes of graphs
with bounded expansion, which include e.g. proper minor-closed graph classes, classes closed on topological minors or
classes of graphs that can be drawn on a fixed surface with bounded number of crossings on each edge.

The algorithm is based on the following approximate min-max characterization.
A subset $A$ of vertices of a graph $G$ is \emph{$d$-independent} if the distance between each
pair of vertices in $A$ is greater than $d$.  Note that the size of the largest $2k$-independent set is a lower bound
for the $k$-domination number.  We show that every graph from a fixed class with bounded expansion
contains a $2k$-independent set $A$ and a $k$-dominating set $D$ such that $|D|=O(|A|)$,
and these sets can be found in linear time.  For domination number ($k=1$) the assumptions can be relaxed,
and the result holds for all graph classes with arrangeability bounded by a constant.
\end{abstract}

\section{Introduction}

For an undirected graph $G$, a set $D\subseteq V(G)$ is \emph{dominating} if every vertex $v\in V(G)\setminus D$ has a neighbor in $D$.
Determining the minimal size $\dom(G)$ of a dominating set in $G$ is NP-complete in general (Karp~\cite{Karp}).  Moreover, even approximating it within factor
better than $O(\log |V(G)|)$ is NP-complete (Raz and Safra~\cite{raz1997sub}).  On the other hand, the problem becomes more manageable when
restricted to some special classes of sparse graphs.  For example, there exists a PTAS for dominating set in planar graphs (Baker~\cite{baker1994approximation}).

In this paper, we follow the approach of B{\"o}hme and Mohar~\cite{bohme2003domination}.  A subset $A$ of vertices of a graph $G$ is \emph{$d$-independent} if the distance between each
pair of vertices in $A$ is greater than $d$.  Denote by $\alpha_d(G)$ the maximum size of a $d$-independent set in $G$. Clearly, every vertex of $G$ has at most one neighbor in a $2$-independent
set; hence, we have $\dom(G)\ge \alpha_2(G)$.  In general, it is not possible to give an upper bound on $\dom(G)$ in the terms of $\alpha_2(G)$; see Section~\ref{sec-lower} for examples of graphs
with $\alpha_2(G)=2$, but unbounded domination number.  However, B{\"o}hme and Mohar~\cite{bohme2003domination} proved that for graphs in any proper minor-closed class, $\dom(G)$ is bounded
by a linear function of $\alpha_2(G)$.

\begin{theorem}[B{\"o}hme and Mohar~\cite{bohme2003domination}, Corollary 1.2]\label{thm-bojan}
If $G$ does not contain $K_{q,r}$ as a minor, then $\dom(G)\le (4r+(q-1)(r+1)) \alpha_2(G) - 3r$.
\end{theorem}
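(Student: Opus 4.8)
The plan is to prove the upper bound constructively: from a \emph{maximum} $2$-independent set I would grow a dominating set of size $O(\alpha_2(G))$, using the excluded minor to control the overhead. First I would fix a maximum $2$-independent set $A$, so that $|A|=\alpha_2(G)$. Being in particular maximal, $A$ has every vertex within distance $2$, so $A$, the set $V_1$ of vertices at distance exactly $1$, and the set $V_2$ of vertices at distance exactly $2$ partition $V(G)$; moreover the closed neighbourhoods $N[a]$ ($a\in A$) are pairwise disjoint because $A$ is $2$-independent. Since $A$ already dominates $A\cup V_1$, the whole task collapses to dominating $V_2$ economically. To do this I would assign to each $w\in V_2$ an \emph{owner} $a\in A$ with $d(a,w)=2$, partitioning $V_2$ into blocks $W_a$, and for each $a$ choose an inclusion-minimal set $S_a\subseteq N(a)$ of \emph{connectors} whose neighbourhoods cover $W_a$. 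Then $D=A\cup\bigcup_a S_a$ dominates $G$ with $|D|=|A|+\sum_a|S_a|$, so everything reduces to the inequality $\sum_a|S_a|=O(|A|)$.

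Minimality of $S_a$ gives, for each $v\in S_a$, a \emph{private} target $p(v)\in W_a$ adjacent to $v$ but to no other vertex of $S_a$; the privates form a set $P\subseteq V_2$ with $|P|=\sum_a|S_a|$, and I must bound $|P|$ by $O(\alpha_2(G))$. The key is a dichotomy for $P$ that plays maximality of $A$ against the excluded minor. On one hand, if some block contained two privates $p,p'$ with $d(p,p')>2$ that are also at distance $>2$ from every owner other than their own $a$, then replacing $a$ by $\{p,p'\}$ would produce a strictly larger $2$-independent set, contradicting the maximality of $A$; hence the ``locally far'' privates of each block are pairwise within distance $2$, i.e.\ they cluster. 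On the other hand I would pass to the minor $\hat G$ obtained by contracting every ball $N[a]$ to a single vertex $\hat a$: then $\hat G$ is again $K_{q,r}$-minor-free, it lives on $\{\hat a:a\in A\}\cup V_2$, and $\hat a$ is adjacent to every target it owns or reaches at distance $2$. A complete bipartite pattern between $q$ of the $\hat a$'s and $r$ of the privates is an honest $K_{q,r}$ \emph{subgraph} of $\hat G$, hence a $K_{q,r}$ minor of $G$ — so the excluded minor directly forbids a cluster of privates from being commonly reached by too many owners or common neighbours.

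Combining the two sides is where I expect the real work to lie. The clustered ``locally far'' privates of a single block, together with $\hat a$ and their shared close neighbours, already form a dense bipartite pattern in $\hat G$, so the forbidden $K_{q,r}$ caps their number by a constant depending on $q$ and $r$; the remaining privates are those close to at least two owners, i.e.\ of $\hat G$-degree $\ge 2$, whose total number I would bound using that $K_{q,r}$-minor-free graphs have bounded degeneracy and edge density (a Mader--Kostochka--Thomason type estimate, which is where the coefficients $4r$ and $(q-1)(r+1)$ should surface). The hard part will be making these two counts fit together into a clean per-owner constant and tracking the exact constant: I must ensure that the augmentation step (maximality of $A$) and the density/extraction step (the excluded minor) partition $P$ cleanly, with no private escaping both arguments, so that $\sum_a|S_a|=O(|A|)$ and therefore $\dom(G)\le|D|=O(\alpha_2(G))$ with the stated bound.
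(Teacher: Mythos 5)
First, note that the paper does not prove this statement at all: it is quoted verbatim from B\"ohme and Mohar (their Corollary~1.2), and the paper's own machinery (Theorem~\ref{thm-main} combined with Theorem~\ref{thm-char}) only recovers a qualitative version of it, with an explicitly worse constant and by a completely different route (a greedy pass over a vertex ordering with bounded weak $2$-coloring number, charging each selected vertex to its weakly accessible predecessors). So your attempt is necessarily a from-scratch reconstruction, and it has a genuine gap: the quantity $\sum_a|S_a|$ that you must bound is \emph{not} $O(|A|)$ for your construction. Concretely, take $q=r=3$ and the graph with vertices $a,a',u,v_1,\dots,v_s,p_1,\dots,p_s$ and edges $av_i$, $v_ip_i$, $p_iu$, $ua'$. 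This graph has treewidth $2$, hence excludes $K_{3,3}$ (even $K_4$) as a minor; one checks that $\alpha_2(G)=2$ and that $A=\{a,a'\}$ is a maximum $2$-independent set with $V_2=\{p_1,\dots,p_s\}$. Your scheme permits assigning every $p_i$ the owner $a$, and then the inclusion-minimal $S_a\subseteq N(a)=\{v_1,\dots,v_s\}$ covering $W_a$ is forced to be all of $\{v_1,\dots,v_s\}$, since $v_i$ is the unique neighbour of $p_i$ in $N(a)$. So $|D|=s+2$ while $\dom(G)=2$ (take $\{a,u\}$): restricting connectors to $N(a)$ and choosing owners arbitrarily destroys the bound. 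The same example pinpoints which of your two counting mechanisms fails. The $p_i$ are all at distance $2$ from the second owner $a'$, so they are your ``type (b)'' privates of $\hat G$-degree $\ge 2$; but in $\hat G$ they span a $K_{2,s}$, which satisfies every edge-density bound valid for $K_{3,3}$-minor-free graphs. In general, ``each private has $\hat G$-degree $\ge 2$'' plus ``$K_{q,r}$-minor-free graphs have at most $c\cdot N$ edges'' yields $2|P_b|\le c(|A|+|P_b|)$, which gives nothing once $c\ge 2$ --- and $c\ge 2$ already for $q=r=3$.

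The other half of your dichotomy is also not closed. The augmentation argument correctly shows that the ``locally far'' privates of one block are pairwise at distance at most $2$, but a set of $t$ vertices pairwise at distance $\le 2$, all adjacent to $\hat a$ in $\hat G$, need not force a $K_{q,r}$ minor for $q\ge 3$: if they all share one common neighbour $z$, the resulting pattern is only a $K_{2,t}$-subdivision, which is planar. So neither the ``cluster'' count nor the ``shared'' count is actually bounded by the excluded minor, and the issue is not merely one of tracking constants. To rescue the approach you would need at minimum (i) a rule for choosing owners and connectors that is not confined to $N(a)$ (a single vertex outside $N(a)$ may dominate all of $W_a$, as $u$ does above), and (ii) an argument that extracts $q$ \emph{disjoint} connected branch sets each adjacent to $r$ common privates, which requires exploiting maximality of $A$ much more aggressively than the pairwise swap you use. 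This is essentially why B\"ohme and Mohar's actual proof, and the present paper's replacement of it, both organize the argument around a carefully ordered iterative selection rather than around a fixed maximum $2$-independent set.
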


The proof of the theorem is constructive, giving a polynomial-time algorithm that finds a dominating set $D$ and a $2$-independent set $A$
such that $|D|\le (4r+(q-1)(r+1))|A|-3r$.  Since $|A|\le \dom(G)$, this approximates $\dom(G)$ within the constant factor $4r+(q-1)(r+1)$.

We generalize Theorem~\ref{thm-bojan} by relaxing the assumption on the considered class of graphs.  First, let us introduce several closely related graph parameters.
Let $v_1$, $v_2$, \ldots, $v_n$ be an ordering of the vertices of a graph $G$.  A vertex $v_a$ is \emph{weakly $k$-accessible} from $v_b$
if $a<b$ and there exists a path $v_a=v_{i_0}, v_{i_1}, \ldots, v_{i_{\ell}}=v_b$ of length $\ell\le k$ in $G$ such that
$a\le i_j$ for $0\le j\le \ell$.  We say that $v_a$ is \emph{$k$-accessible} from $v_b$ if additionally $b\le i_j$ for $1\le j\le \ell$.
For a fixed ordering of $V(G)$, let $Q_k(v)$ denote the set of vertices that are weakly $k$-accessible from $v$, $R_k(v)$ the set of vertices that are weakly $k$-accessible from $v$
and let $q_k(v)=|Q_k(v)|$ and $r_k(v)=|R_k(v)|$.
The \emph{$k$-backconnectivity} $b_k(v)$ of $v$ with respect to the fixed ordering of $V(G)$ is the maximum number of paths from $v$ of length
at most $k$ that intersect only in $v$, such that all endvertices of these paths distinct from $v$ appear before $v$ in the ordering
(clearly, we can assume that the internal vertices of the paths appear after $v$ in the ordering).  Note that $b_k(v)\le r_k(v)\le q_k(v)$.
The \emph{weak $k$-coloring number}, \emph{$k$-coloring number} and \emph{$k$-admissibility} of the ordering is the maximum of $1+q_k(v)$,
$1+r_k(v)$ and $b_k(v)$, respectively, over $v\in V(G)$.  The weak $k$-coloring number $\wcol_k(G)$ of $G$ is the minimum of the weak $k$-coloring numbers
over all orderings of $V(G)$, and the $k$-coloring number $\col_k(G)$ and $k$-admissibility $\adm_k(G)$ of $G$ are defined analogically.

Obviously, $\adm_k(G)<\col_k(G)\le \wcol_k(G)$.  Conversely, it is easy to see that $\wcol_k(G)\le \col^k_k(G)$ (Kierstead and Yang~\cite{kierstead2003orderings})
and that $\col_k(G)\le \adm_k^k(G)+1$ (Lemma~\ref{lemma-admrel} in Section~\ref{sec-order}).
Let us remark that $\wcol_1(G)-1=\col_1(G)-1=\adm_1(G)$ is equal to the degeneracy of $G$, and that $\col_2(G)-1$ and $\adm_2(G)$
are known as the \emph{arrangeability} and \emph{admissibility} of $G$, respectively, in the literature (see e.g. \cite{arr1}, \cite{arr2} or \cite{arr3}).
For the domination number, our main result can be stated as follows.

\begin{theorem}\label{thm-mainspec}
If $G$ satisfies $\wcol_2(G)\le c$, then $\dom(G)\le c^2\alpha_2(G)$.
\end{theorem}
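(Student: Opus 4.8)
The plan is to fix an ordering $v_1,\dots,v_n$ of $V(G)$ witnessing $\wcol_2(G)\le c$, so that $|Q_2(v)|\le c-1$ for every vertex $v$, and to produce simultaneously a $2$-independent set $A$ and a dominating set $D\supseteq A$ with $|D|\le c^2|A|$. Since $A$ is $2$-independent we have $|A|\le\alpha_2(G)$, and since $D$ is dominating we get $\dom(G)\le|D|\le c^2|A|\le c^2\alpha_2(G)$, the desired inequality. Thus the whole task reduces to constructing such a pair $(A,D)$.

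First I would build $A$ by the natural greedy rule applied \emph{in reverse order}: processing $v_n,v_{n-1},\dots,v_1$, add the current vertex to $A$ exactly when no vertex already placed in $A$ (necessarily of larger index) lies within distance $2$. The resulting $A$ is $2$-independent and maximal, so every vertex of $G$ lies within distance $2$ of $A$. Consequently $A\subseteq D$ already dominates $A$ itself and every vertex at distance exactly $1$ from $A$; the only vertices still to be dominated form the set $U$ of vertices at distance \emph{exactly} $2$ from $A$, and for each such $u$ it suffices to add to $D$ one common neighbour of $u$ and a nearby centre.

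The heart of the argument is to choose these extra dominators and charge them to centres so that each centre is charged only $O(c)$ times; this is where the ordering enters. When $u\in U$ was processed it was rejected because some centre $a$ of larger index lay within distance $2$; as $u$ is undominated this distance is exactly $2$, so there is a common neighbour $w$ of $u$ and $a$, and $u$ precedes $a$. I would split on the position of $w$: if $w$ comes after $u$, the path $u\,w\,a$ certifies $u\in Q_2(a)$; if $w$ comes before $u$, then $w$ is a backward neighbour of both $a$ and $u$, so $w\in Q_2(a)$. In either case a witness in $Q_2(a)$ is produced, and I would take $w$ itself as the dominator of $u$. Charging each added dominator to the corresponding element of $Q_2(a)$ and checking that each element of $Q_2(a)$ is charged a bounded number of times gives $|D\setminus A|=O(c)\,|A|$, hence $|D|\le c^2|A|$.

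The step I expect to be most delicate is exactly this charging: one must verify that every undominated vertex is captured by the backward-reachable set $Q_2$ of its blocking centre \emph{regardless} of where $w$ falls in the ordering (the case where $w$ precedes $u$ is precisely the one that would fail for the ordinary, non-weak, colouring number), and that the map sending each extra dominator to its slot in $\bigcup_{a\in A}Q_2(a)$ is bounded-to-one. Getting the reverse processing direction right is essential: the example of a long subdivided star shows that greedily building $A$ in the forward direction can produce an $A$ far smaller than $\alpha_2(G)$, destroying the ratio, whereas the reverse order forces the centres to be spread out and keeps $U$ small.
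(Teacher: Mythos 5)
Your proposal is correct, but it takes a genuinely different route from the paper's. The paper runs a \emph{forward} greedy: it repeatedly takes the first surviving vertex $v$, puts $v$ into a set $A'$ and all of $\{v\}\cup Q_2(v)$ into $D$, and discards everything dominated by $\{v\}\cup Q_2(v)$; the resulting $A'$ is \emph{not} $2$-independent, so the paper then forms an auxiliary graph $H$ on $A'$ joining vertices at distance at most $2$, shows $\col_1(H)\le c$ using the $Q_2$-sets, and extracts an independent (hence $2$-independent in $G$) subset $A$ with $|A'|\le c|A|$; the two factors of $c$ come from $|D|\le c|A'|$ and $|A'|\le c|A|$. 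You instead build a maximal $2$-independent set directly by a \emph{reverse} greedy and pay only for the vertices at distance exactly $2$, charging each added dominator to a slot in $\bigcup_{a\in A}Q_2(a)$ via the case split on the middle vertex $w$. The charging you flag as delicate does go through: the case-one vertices $u$ (those with $w$ after $u$) inject into $\{(a,x):a\in A,\ x\in Q_2(a)\}$, which has size at most $(c-1)|A|$, while the case-two dominators all lie in $\bigcup_{a\in A}Q_1(a)$, again at most $(c-1)|A|$ distinct vertices; hence $|D|\le(2c-1)|A|\le c^2|A|$ for $c\ge 2$, and $c=1$ forces $G$ edgeless, where the claim is trivial. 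Your approach buys a better constant (about $2c$ rather than $c^2$) and avoids the auxiliary colouring step; the paper's approach buys generality, since the same forward-greedy-plus-colouring argument proves the full Theorem~\ref{thm-main} for $k$-domination and $m$-independence with $m\le 2k+1$, where your analysis of the single middle vertex of a length-$2$ path would have to be replaced by tracking where the minimum-index vertex of a path of length up to $m$ sits relative to its endpoints---essentially what the paper's $T_w$ bookkeeping accomplishes. One small quibble: your closing remark that a forward greedy would yield an $A$ far smaller than $\alpha_2(G)$ misdiagnoses the failure mode; any maximal $2$-independent set is a valid lower bound for $\dom(G)$, and what actually breaks in the forward direction is the charging, because the blocking centre would then precede $u$, so neither $u$ nor $w$ need lie in its $Q_2$.
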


The proof gives a linear-time algorithm to find the corresponding dominating and $2$-independent sets, assuming that the ordering of the vertices
of $G$ with weak $2$-coloring number at most $c$ is given.  We discuss the algorithmic and complexity aspects of obtaining such an ordering in Section~\ref{sec-order}.
To relate Theorem~\ref{thm-mainspec} to Theorem~\ref{thm-bojan}, we use the following characterization.  For an integer $t\ge 0$ and a graph $G$, let $\sd_t(G)$
denote the graph obtained from $G$ by subdividing each edge exactly $t$ times.

\begin{theorem}[Dvo\v{r}\'ak~\cite{dvorak2008forbidden}, Theorem 9]\label{thm-char}
Let $G$ be a graph and $d$ an integer.
If $\delta(H)<d$ for every $H$ such that $H\subseteq G$ or $\sd_1(H)\subseteq G$, then $\col_2(G)\le 4d^2(4d+5)+1$.
\end{theorem}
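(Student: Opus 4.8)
The plan is to bound the $2$-admissibility $\adm_2(G)$ and then invoke the relation $\col_2(G)\le \adm_2(G)^2+1$ (Lemma~\ref{lemma-admrel}); it then suffices to prove $\adm_2(G)\le 2d\sqrt{4d+5}$, since squaring gives exactly $\col_2(G)\le 4d^2(4d+5)+1$. I would first restate the two hypotheses in a usable form. The condition that $\delta(H)<d$ for all $H\subseteq G$ says precisely that $G$ is $(d-1)$-degenerate, so in particular every induced subgraph has a vertex of degree at most $d-1$ and $G$ admits a degeneracy ordering. The condition that $\delta(H)<d$ for all $H$ with $\sd_1(H)\subseteq G$ says that $G$ contains no $1$-subdivision of a graph of minimum degree at least $d$.

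It is worth recording the easy direction first, as it shows these are the right hypotheses. If $\sd_1(F)\subseteq G$, fix any ordering and let $b^\ast$ be the branch vertex of $F$ that appears last. Each of its $\deg_F(b^\ast)\ge\delta(F)$ neighbours $a$ in $F$ comes earlier than $b^\ast$ and is joined to $b^\ast$ through its private subdivision vertex $m_a$: if $m_a$ precedes $b^\ast$ this gives the length-one path $b^\ast m_a$ to an earlier vertex, and if $m_a$ follows $b^\ast$ this gives the length-two path $b^\ast m_a a$ with internal vertex later and endpoint earlier. These paths use distinct middles and distinct endpoints, so they are internally disjoint and $b_2(b^\ast)\ge\delta(F)$ in every ordering. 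Hence $\sd_1(F)\subseteq G$ forces $\adm_2(G)\ge\delta(F)$, so bounded admissibility is obstructed only by dense subgraphs and dense $1$-subdivisions. The real content is the converse extraction.

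The main step is to show that large $2$-admissibility forces one of the two forbidden dense structures. Assume $\adm_2(G)\ge t$ and work in a degeneracy ordering. For each vertex I would split its back-connectivity into the length-one part, which is at most $d-1$ by degeneracy, and the length-two part, consisting of internally disjoint paths $v-w-u$ with $u$ earlier and $w$ later than $v$; thus the relevant vertex carries at least $t-(d-1)$ such cherries. The goal is to assemble cherries into a single $\sd_1(F)$ with $\delta(F)\ge d$. I would do this by an iterative peeling: repeatedly pick a vertex that still has many internally disjoint length-two paths into the current graph, declare it a branch vertex, reserve the corresponding middle vertices as subdivision vertices, and delete them so later choices remain disjoint. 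Degeneracy enters twice: it bounds how many cherries can share a common high-degree middle or endpoint (each later neighbour of $v$ contributes at most $d-2$ usable partners, applying the degeneracy bound at its own removal), and, via a Menger/fan-type argument, it forces enough reserved middles to have low degree so that the reserved cherries really form a $1$-subdivision rather than collapsing. The two losses—overlaps charged to the degeneracy $d-1$, and the gradual drop of degrees as middles are reserved across the iteration—compound multiplicatively, so that a vertex surviving with $t$ disjoint cherries ultimately yields a branch vertex of degree at least roughly $t^2/(4d(4d+5))$ in $F$; this is exactly where the square and the constant $4d+5$ come from.

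Putting this together, if $\adm_2(G)>2d\sqrt{4d+5}$ then $t^2/(4d(4d+5))>d$, so the extraction produces either a subgraph of minimum degree at least $d$ (contradicting the first hypothesis) or a graph $F$ with $\sd_1(F)\subseteq G$ and $\delta(F)\ge d$ (contradicting the second). Hence $\adm_2(G)\le 2d\sqrt{4d+5}$ and therefore $\col_2(G)\le\adm_2(G)^2+1\le 4d^2(4d+5)+1$. I expect the extraction in the previous paragraph to be the main obstacle, and for a genuinely global reason: a single vertex of large back-connectivity only exhibits a subdivided star, of minimum degree one, so one cannot argue locally or by a naive averaging over cherries. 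The difficulty is to convert the fact that \emph{every} ordering has a bad vertex into \emph{many} branch vertices of large degree at once, while keeping all chosen cherries internally disjoint and their middles of low degree throughout the peeling; making disjointness survive the iteration and charging the unavoidable overlaps to the degeneracy bound with the correct constants is where the quantitative work lies.
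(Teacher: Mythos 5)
First, a point of comparison: this paper does not prove Theorem~\ref{thm-char} at all; it imports it verbatim from \cite{dvorak2008forbidden}, so there is no in-paper proof to measure you against. Judged on its own terms, your proposal has the right frame: reduce to bounding $\adm_2(G)$ and then apply Lemma~\ref{lemma-admrel}, which for $m=2$ gives $\col_2(G)\le c(c-1)+1\le c^2+1$, and your ``easy direction'' showing that $\sd_1(F)\subseteq G$ forces $\adm_2(G)\ge\delta(F)$ is correct (it is exactly the converse observation the paper records after stating the theorem). But the entire content of the theorem lives in the extraction step, and that step is not proved. It is described in phrases (``degeneracy enters twice,'' ``a Menger/fan-type argument,'' ``the two losses compound multiplicatively'') that name the difficulties without resolving them, and you say yourself that you expect this to be the main obstacle. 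The target $\adm_2(G)\le 2d\sqrt{4d+5}$ and the intermediate quantity $t^2/(4d(4d+5))$ are reverse-engineered from the constant $4d^2(4d+5)+1$ rather than derived from an argument.

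Two concrete failure points in the sketch as written. (i) You start from a degeneracy ordering and the assumption $\adm_2(G)\ge t$. That assumption guarantees only that this ordering, like every ordering, has \emph{some} vertex of $2$-backconnectivity at least $t$, i.e.\ one vertex carrying $t$ cherries --- a subdivided star with $\delta=1$, as you yourself observe. To produce $F$ with $\delta(F)\ge d$ you need every branch vertex to retain many reserved cherries whose far endpoints are again branch vertices; your peeling has no mechanism forcing this, since the endpoints of the cherries you reserve need never themselves be rich in cherries. The standard way to obtain the required global structure (and the route in \cite{dvorak2008forbidden}, mirrored by the greedy Algorithm~\ref{alg-adm} in Section~\ref{sec-order}) is the min--max characterization of admissibility: if $\adm_2(G)>t$ then the greedy ordering fails at some step, yielding a set $S$ in which \emph{every} vertex $v$ satisfies $b_2(S,v)>t$. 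Without such a set the iteration has no reason to terminate with all degrees of $F$ large. (ii) Even given such an $S$, the crux is making the subdivision vertices globally distinct across different branch vertices: a middle $w\notin S$ adjacent to $r$ vertices of $S$ is a candidate middle for on the order of $r^2$ pairs but may serve as the subdivision vertex of only one edge of $F$; bounding this via the degeneracy of the incidence structure between $S$ and the middles is precisely where the quadratic loss arises, and ``charging the unavoidable overlaps to the degeneracy bound with the correct constants'' is a placeholder for that computation, not the computation. As it stands the proposal is a correct reduction plus an accurate description of what remains open, not a proof.
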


Conversely, let us note that if $\delta(H)=d$, then $\col_2(\sd_1(H))\ge\adm_2(\sd_1(H))\ge d$, which is easy to see by considering the last vertex of degree at least $d$ in
the optimal ordering for $2$-admissibility.  Consider now a proper minor-closed graph class $\GG$.  There exists a constant $c$ such that all graphs in $\GG$ have minimum degree
less than $c$ (Kostochka~\cite{kostomindeg}).  Now, if $\sd_1(H)\subseteq G$ for a graph $G\in \GG$, then $H$ is a minor of $G$ and belongs to $\GG$ as well,
and thus $\delta(H)\le c$.  Theorem~\ref{thm-char} thus implies that $\col_2(G)=O(c^3)$ and we can apply Theorem~\ref{thm-mainspec} for $G$.  Therefore, we indeed generalize
Theorem~\ref{thm-bojan}, although the multiplicative constant in our result may be greater.  More generally, the same argument shows that Theorem~\ref{thm-mainspec} applies to all graph classes closed on topological subgraphs.

B{\"o}hme and Mohar~\cite{bohme2003domination} in fact proved a more general result concerning distance domination.  A set $D\subseteq V(G)$ is \emph{$k$-dominating} if
the distance from any vertex of $G$ to $D$ is at most $k$; thus, $1$-dominating sets are precisely dominating sets.  Let $\dom_k(G)$ denote the size of the smallest
$k$-dominating set in $G$.  Clearly, $\dom_k(G)\ge \alpha_{2k}(G)$.  Theorem 1.1 of \cite{bohme2003domination} shows that in any proper minor-closed class of graphs,
$\dom_k(G)=O(\alpha_m(G))$, for any $m<\frac{5}{4}(k+1)$.  We strengthen this result by considering less restricted classes of graphs as well as increasing
$m$ to the natural bound:

\begin{theorem}\label{thm-main}
If $1\le m\le 2k+1$ and $G$ satisfies $\wcol_m(G)\le c$, then $\dom_k(G)\le c^2\alpha_m(G)$.  Furthermore, if
an ordering of $V(G)$ such that $q_m(v)< c$ for every $v\in V(G)$ is given, then a $k$-dominating set $D$ and
an $m$-independent set $A$ such that $|D|\le c^2|A|$ can be found in $O(c^2\max(k,m)|V(G)|)$ time.
\end{theorem}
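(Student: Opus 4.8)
The plan is to construct the two sets by a single greedy sweep and to charge the dominating set against the independent set using the weak accessibility structure \emph{twice}, which is where the factor $c^2$ will come from. Write $\hat Q_m(u)=Q_m(u)\cup\{u\}$; since the given ordering satisfies $q_m(v)<c$ we have $|\hat Q_m(u)|\le c$ for every $u$, and because $Q_s(u)\subseteq Q_m(u)$ for $s\le m$ the same bound holds at every radius $s\le m$. The single tool I would isolate first is the following \emph{shared-ancestor fact}: if $\mathrm{dist}(u,w)\le m$ and $x$ is the vertex of smallest index on a fixed shortest $u$--$w$ path, then $x\in\hat Q_m(u)\cap\hat Q_m(w)$, and moreover $\mathrm{dist}(u,x)+\mathrm{dist}(x,w)=\mathrm{dist}(u,w)\le m\le 2k+1$, so $x$ lies within distance $k$ of at least one of $u,w$. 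The first half is immediate from the definition of weak accessibility (every vertex of the path has index at least that of $x$), and the second half is the only place where the hypothesis $m\le 2k+1$ is used.

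I would then process the vertices in the \emph{reverse} of the given order (largest index first), maintaining a partial set $D$. When a vertex $v$ is reached that is not yet within distance $k$ of $D$, I put $v$ into $A$ and add to $D$ a set $S_v$ that $k$-dominates the whole ball $B_m(v)$ of radius $m$ around $v$. Two of the three required properties are then bookkeeping. For $m$-independence: if $u,v\in A$ with $u$ of smaller index, then $v$ was processed first and $B_m(v)$ was already entirely $k$-dominated by $S_v\subseteq D$; since $u$ was still undominated when chosen, $u\notin B_m(v)$, i.e.\ $\mathrm{dist}(u,v)>m$. For $k$-domination: every vertex is either within distance $k$ of $D$ when processed, or is added to $A$ and then dominated by its own $S_v$ (as $v\in B_m(v)$). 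Finally $|D|=\sum_{v\in A}|S_v|$, so, once I know $|S_v|\le c^2$ for each chosen $v$, the $m$-independence of $A$ gives $|A|\le\alpha_m(G)$ and hence $\dom_k(G)\le|D|\le c^2|A|\le c^2\alpha_m(G)$.

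The heart of the argument, and the step I expect to be the main obstacle, is the covering claim: for a vertex $v$ \emph{added to} $A$, the ball $B_m(v)$ can be $k$-dominated by at most $c^2$ vertices. This cannot hold for an arbitrary $v$ (a high-reach, low-index centre has a huge ball but a tiny $\hat Q_m$), so the proof must use that $v$ was undominated at its turn, which is exactly why reverse order matters. The mechanism I would try is a two-level use of the shared-ancestor fact. At level one I put $\hat Q_m(v)$ into $S_v$ (at most $c$ vertices); by the fact, every $u\in B_m(v)$ whose smallest-index ancestor $x_u$ on a shortest $u$--$v$ path satisfies $\mathrm{dist}(u,x_u)\le k$ is already $k$-dominated. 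The remaining ``far'' vertices are those whose ancestor sits within distance $k$ of $v$ but farther than $k$ from $u$; for each of the at most $c$ possible ancestors $x$ one is left with a family of far vertices lying within distance $m$ of $x$ and reachable from $x$ by paths staying above the index of $x$, and the task is to $k$-dominate each such family by at most $c$ further vertices, yielding $c\cdot c=c^2$. The genuine difficulty is that the least-index trick cannot be iterated (the smallest-index vertex on a subpath ending at the current ancestor is the ancestor itself), so a new idea is needed for the second factor of $c$. I would seek it either by applying the weak-accessibility bound to the far vertices directly (their sets $\hat Q_k(\cdot)$ have size at most $c$, and once one passes to a $k$-independent subfamily these should collide inside a set of size $c$) or by recursing the construction on the ``forward cone'' of each ancestor $x$.

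For the running time I would, at each processed vertex, compute $B_m(v)$ and the relevant accessibility data by a breadth-first search truncated at depth $\max(k,m)$, examining $O(c^2)$ vertices around $v$; summed over all vertices this yields the claimed $O(c^2\max(k,m)|V(G)|)$ bound, provided the sets $\hat Q_s(\cdot)$ are precomputed from the ordering in linear time. The cleanest outcome would be to prove the covering claim as a stand-alone lemma phrased purely in terms of $\wcol_m$, after which both the inequality $\dom_k(G)\le c^2\alpha_m(G)$ and the algorithm follow at once from the greedy above.
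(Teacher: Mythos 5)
Your overall architecture differs from the paper's in one crucial way, and that difference is exactly where the proof breaks. The paper also runs a greedy sweep and also uses the shared-ancestor observation you isolate (the minimum-index vertex $z$ of a path of length at most $m$ between two selected centres lies in $Q_m$ of both endpoints and, since $m\le 2k+1$, within distance $k$ of one of them); but it never tries to $k$-dominate the whole ball $B_m(v)$ at a selection step. It processes vertices in the forward order, adds only the at most $c$ vertices $\{v\}\cup Q_m(v)$ to $D$ per selected centre, accepts that the resulting set $A'$ of centres is \emph{not} $m$-independent, and recovers an $m$-independent subset of size at least $|A'|/c$ a posteriori: the auxiliary graph $H$ on $A'$ joining pairs at distance at most $m$ is shown to have $1$-coloring number at most $c$ (each back-edge of $H$ at $a_i$ is charged via $z$ to some $w\in Q_m(a_i)$, and at most one earlier centre can have $w$ in its $\hat Q_k$ because everything within distance $k$ of $w$ was removed when that centre was selected), so $\chi(H)\le c$ and the largest colour class works. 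That is where the second factor of $c$ comes from.

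Your covering claim --- that for a vertex $v$ undominated at its turn in the reverse sweep, $B_m(v)$ can be $k$-dominated by at most $c^2$ vertices --- is not only unproved (you say yourself that a new idea is needed for the second factor of $c$); it is false. Take $m=2k+1$ and the spider obtained from $\sd_k(K_{1,n})$ with centre $x$ by attaching one extra path $x,w_1,\dots,w_k=v$; order the vertices $x$ first, then the leaf branches by increasing depth, then $w_1,\dots,w_k$. In this tree every weakly $m$-accessible set consists only of ancestors, so $q_m\le k+1$ and $c=k+2$ suffices, independent of $n$. In your reverse sweep $v$ is processed first, hence undominated, and $B_{2k+1}(v)$ contains all $n$ leaves, which pairwise require distinct $k$-dominators because each branch has length $k+1$; hence $|S_v|\ge n$ while your $A$ ends up being $\{v\}$. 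The correct output here is $A=\{\ell_1,\dots,\ell_n\}$: the large independent set lives \emph{inside} the very ball you are charging to the single vertex $v$, so no per-step covering lemma ``phrased purely in terms of $\wcol_m$'' can exist, with or without the undominated-at-its-turn condition. The rest of your write-up (independence and domination of your $A$ and $D$, the running-time accounting) is fine conditional on the covering claim, but that claim is the whole theorem, and the dominate-the-entire-ball scheme cannot deliver it; the charge must instead be spread over many independent vertices found inside the balls, which is what the paper's colouring of $H$ accomplishes.
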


The bound $2k+1$ on $m$ instead of $2k$ may seem surprising at first.  It is caused by the following parity reason: suppose that $T$ is a $2k$-independent set
and $v$ a vertex such that for every pair of vertices $x,y\in T$, the shortest path between $x$ and $y$ passes through $v$.  Since $T$ is $2k$-independent,
at most one vertex of $T$ is in distance at most $k$ from $v$.  Therefore, $T$ contains a $(2k+1)$-independent subset of size at least $|T|-1$.

For which graph classes can Theorem~\ref{thm-main} be applied for every $k\ge 0$? I.e., for what graph classes does there exist
a function $f$ such that $\wcol_m(G)\le f(m)$ for every graph $G$ in the class?  By Zhu~\cite{zhu2009colouring}, these are precisely
the graph classes with \emph{bounded expansion} (see Ne\v{s}et\v{r}il and Ossona de Mendez~\cite{nessurvey} for various equivalent definitions
and properties of such graph classes).  Let us note that most classes of ``structurally sparse'' graphs have bounded expansion,
including proper graph classes closed on topological minors and graphs that can be drawn in a fixed surface with bounded number of crossings
on each edge.

\section{Proof of the main result}

Theorem~\ref{thm-mainspec} is a special case of Theorem~\ref{thm-main} with $k=1$ and $m=2$, thus it suffices to prove the latter.
We defer the discussion of the algorithmic aspects to Section~\ref{sec-order}, and prove here just the existence of the sets $D$ and $A$
with the required properties.

\begin{proof}[Proof of Theorem~\ref{thm-main}]
Let $v_1$, \ldots, $v_n$ be an ordering of vertices of $G$ such that $q_m(v)\le c-1$ for every $v\in V(G)$.  We construct
sets $D$ and $A'$ using Algorithm~\ref{alg-main}.  Clearly, $D$ is a $k$-dominating set in $G$ and $|D|\le c|A'|$.
\begin{algorithm}
\begin{itemize}
\item initialize $D\coloneqq\emptyset$, $A'\coloneqq\emptyset$ and $R\coloneqq V(G)$
\item while $R$ is nonempty, repeat:
\begin{itemize}
\item let $v$ be the first vertex of $R$ in the ordering
\item set $A'\coloneqq A'\cup\{v\}$
\item set $D\coloneqq D\cup \{v\}\cup Q_m(v)$
\item remove from $R$ all vertices whose distance from $\{v\}\cup Q_m(v)$ is at most~$k$
\end{itemize}
\end{itemize}
\caption{Finding the dominating set}\label{alg-main}
\end{algorithm}

For each $w\in A'$, let $T_w$ be the set of vertices $a_t\in A'$ such that $w\in \{a_t\}\cup Q_k(a_t)$.
Let $H$ be the graph with vertex set $A'$ such that
$uv\in E(H)$ iff the distance between $u$ and $v$ in $G$ is at most $m$.  Let $a_1$, $a_2$, \ldots, $a_s$ be the vertices of $H$ in
the order consistent with the ordering of $V(G)$. 

Consider vertices $a_i, a_j\in V(H)$ such that $j<i$ and
$G$ contains a path $P$ of length at most $m$ between $a_i$ and $a_j$.  Let $z$ be the first vertex of $P$ according to the ordering
of $V(G)$.  Observe that $z\in Q_m(a_i)\cap (\{a_j\}\cup Q_m(a_j))$.  By the construction of $A'$, the distance of $a_i$ from
$\{a_j\}\cup Q_m(a_j)$ is at least $k+1$, and thus
the length of the subpath of $P$ between $a_j$ and $z$ is at most $m-k-1\le k$.  Therefore, we have $a_j\in T_z$.  It follows that if $a_j$ is
$1$-accessible from $a_i$ in $H$, then $a_j\in \bigcup_{w\in Q_m(a_i)} T_w$.  On the other hand, we have $|T_w|\le 1$ for every $w\in A'$,
since if $x\in T_w$, then all the vertices whose distance from $w$ is at most $k$ were removed from $R$ when we added $x$ to $A'$.  Therefore, the number of
vertices of $H$ that are $1$-accessible from $a_i$ is at most $q_m(a_i)\le c-1$.

We conclude that $\col_1(H)\le c$.  Since $\col_1(H)\ge \chi(H)$, the graph $H$ has an independent set $A$ of size at least $|A'|/c$.
By the definition of $H$, the set $A$ is $m$-independent in $G$, and we have $|D|\le c|A'|\le c^2|A|$ as required.
\end{proof}

\section{Algorithmic aspects}\label{sec-order}

Let $G$ be a graph on $n$ vertices such that $\wcol_m(G)\le c$.  First, assume that we are given an ordering of
$V(G)$ such that $q_m(v)< c$ for every $v\in V(G)$.  Since $m\ge 1$, this implies that $G$ is $c$-degenerate, and thus it has at most $cn$ edges.

For each $i\le m$ and $v\in V(G)$, we determine the set $Q_i(v)$ (whose size is bounded by $c$) using the following algorithm:
For $i=1$, $Q_1(v)$ is the set of neighbors of $v$
that appear before it in the ordering, which can be determined by enumerating all the edges incident with $v$.  For $i>1$, $Q_i(v)$ is the subset
of $Q_1(v)\cup \bigcup_{uv\in E(G)} Q_{i-1}(u)$ consisting of the vertices before $v$ in the ordering.  Note that $Q_i(v)$ can be determined
in $O(c(\deg(v)+1))$, assuming that $Q_{i-1}$ was already computed before.  Therefore, each $Q_i$ can be computed for all vertices of $G$
in $O(c^2n)$, and in total we spend time $O(c^2mn)$ to determine $Q_m(v)$ for every vertex of $G$.

With this information, we can implement Algorithm~\ref{alg-main} in time $O(c(k+1)n)$.
The only nontrivial part is the removal of the vertices from $R$.  For each vertex $v$ of $V(G)$ we maintain the value $p(v)=\min(k + 1, d(v))$,
where $d(v)$ is the distance of $v$ from $D$.  In each step, we have $v\in R$ iff $p(v)=k+1$ and $v\in D$ iff $p(v)=0$.  When a vertex $v$ is added to $D$,
we decrease $p(v)$ to $0$.  For each vertex $w$, whenever the value of $p(w)$ decreases, we recursively propagate this change to the neighbors of $w$:
if $uw\in E(G)$ and $p(u)>p(w)+1$, then we decrease $p(u)$ to $p(w)+1$.  Clearly, the value of $p(w)$ decreases at most $(k+1)$ times during the
run of the algorithm, and we spend time $O((k+1)\deg(v))$ by updating it and propagating the decrease to the neighbors.  Therefore, the total time
for maintaining the set $R$ is bounded by $O(c(k+1)n)$.

For the final part of the algorithm, we need to determine the edges of $H$.  First we compute the set $T_w$ for each vertex $w\in V(G)$:
we initialize these sets to $\emptyset$, and then for each $a\in A'$, we add $a$ to $T_w$ for each $w\in \{a\}\cup Q_k(a)$.  A supergraph $H'$ of $H$
with $\col_1(H')\le c$ is then obtained by joining each $a\in A'$ with all the elements of $\bigcup_{w\in Q_m(a)} T_w$ that precede $a$ in the ordering.
We find a proper coloring of $H'$ by at most $c$ colors using the standard greedy algorithm, and choose $A$ as the largest color class in this coloring.
The time for this phase is $O(cn)$.

Therefore, the total complexity of the algorithm is $O(c^2\max(m,k)n)$.
The space complexity is bounded by the space needed to represent $Q_k$ and $Q_m$, and thus it is $O(cn)$.

Let us now turn our attention to the problem of finding a suitable ordering of vertices.  We were not able to find a polynomial-time algorithm to
determine $\wcol_m(G)$ for $m\ge 2$, and we conjecture that the problem is NP-complete.  However, determining $\adm_m(G)$ appears to
be easier, and the corresponding ordering has also bounded weak $m$-coloring number.

\begin{lemma}\label{lemma-admrel}
Let $G$ be a graph and $v_1$, $v_2$, \ldots, $v_n$ an ordering of its vertices with $m$-admissibility at most $c$.
Then the $m$-coloring number of the ordering is at most $c(c-1)^{m-1}+1$.
\end{lemma}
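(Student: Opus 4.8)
The plan is to prove the equivalent statement $r_m(v)\le c(c-1)^{m-1}$ for every vertex $v$, since the $m$-coloring number of the ordering equals $\max_v(1+r_m(v))$, and the hypothesis gives $b_i(v)\le b_m(v)\le c$ for all $i\le m$ and all $v$ (a shorter path is also a path of length at most $m$). Fix $v$ and write $p$ for its position. Every vertex of $R_m(v)$ is the end of a path $v=w_0,w_1,\ldots,w_\ell=u$ of length $\ell\le m$ whose internal vertices $w_1,\ldots,w_{\ell-1}$ all lie after $v$ (position $>p$) and whose endpoint $u$ lies before $v$ (position $<p$).

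The engine of the proof is a \emph{pivot} observation that removes the only real difficulty, namely that a subpath need not itself witness accessibility from its new origin (an intermediate vertex may sit between $p$ and the position of a later vertex on the path). For a target $u$ with $\ell\ge 2$, let $y=w_t$ be the internal vertex of \emph{minimum} position. Since every other internal vertex then has strictly larger position than $y$, the suffix $w_t,\ldots,w_\ell=u$ witnesses $u\in R_{m-1}(y)$ and the reversed prefix $w_t,\ldots,w_0=v$ witnesses $v\in R_{m-1}(y)$; moreover these two paths meet only in $y$. Thus each far target is reached, cleanly, from a pivot $y$ that also reaches $v$ cleanly, and the two routes out of $y$ are internally disjoint.

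With this in hand I would argue by induction on $m$. The base case is immediate: $r_1(v)=b_1(v)\le c$. For the inductive step I route every target through its pivot and count. The factor $c-1$ (rather than $c$) at the non-root levels comes precisely from the disjointness above: at a pivot $y$ one of its at most $b_m(y)\le c$ internally disjoint back-directions is already spent on the route toward $v$, so at most $c-1$ of the remaining first edges at $y$ can open onto new targets. Reaching the pivots themselves is in turn controlled by the reversed-prefix paths, which are again clean accessibility witnesses and so are governed by the induction hypothesis with a smaller budget. Summing the contributions over the split $\ell=a+b$ of the path length at the pivot then telescopes to $c(c-1)^{m-1}$.

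The step I expect to be the main obstacle is turning this local pivot picture into a single global count without over- or under-counting: one must organise the chosen witnessing paths into one rooted tree in which the root $v$ has out-degree at most $c$ and every other node out-degree at most $c-1$, show that the admissibility bound really does cap these out-degrees (this is the one place the hypothesis is used, via the internally disjoint routes at each pivot), and verify that distinct vertices of $R_m(v)$ land on distinct leaves. Making the choice of witnessing path and of pivot consistent across all targets—so that the tree is well defined and the disjointness needed for the $c-1$ bound holds simultaneously at every node—is the delicate point; the remainder is bookkeeping that matches the claimed bound $c(c-1)^{m-1}+1$ on the $m$-coloring number.
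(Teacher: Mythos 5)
Your central insight is the right one---the branching of the witnessing paths at any vertex is capped by the $m$-backconnectivity, because the branches are internally disjoint paths of length at most $m$ ending at vertices earlier in the ordering---but the argument as written stops exactly where the proof has to happen. The pivot-based induction does not close: routing each target $u$ through the minimum-position internal vertex $y_u$ of its witnessing path gives you, for each pivot $y$, a set of targets inside $R_{m-1}(y)$, but nothing bounds the number of distinct pivots (there can be arbitrarily many vertices $y$ after $v$ with $v\in R_{m-1}(y)$), and nothing forces the paths from a fixed pivot $y$ to its various targets to be pairwise internally disjoint, so the ``$c-1$ remaining back-directions at $y$'' count is not justified. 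The claimed telescoping over the split $\ell=a+b$ is therefore an assertion, not a proof. Your final paragraph correctly names what is actually needed---a single rooted tree in which the root has out-degree at most $c$ and every other internal vertex out-degree at most $c-1$---but explicitly defers its construction, and that construction is essentially the whole proof.

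The paper does this directly, without pivots and without induction on $m$. For each $u\in R_m(v)$ fix a witnessing path of length at most $m$ whose internal vertices all come after $v$; merging these paths (for instance by a breadth-first search from $v$ that is only allowed to pass through vertices after $v$ and stops upon reaching a vertex of $R_m(v)$) yields a tree $T\subseteq G$ rooted at $v$, of depth at most $m$, whose leaf set is $R_m(v)$ and whose non-leaf vertices other than $v$ all come after $v$. Now for any non-leaf vertex $u$ of $T$, the $\deg_T(u)$ walks obtained by leaving $u$ along each incident tree edge and continuing until reaching the root or a leaf are paths that pairwise meet only in $u$, have length at most $m$, and end in $\{v\}\cup R_m(v)$, i.e., at vertices before $u$ in the ordering; hence $\deg_T(u)\le b_m(u)\le c$. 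A depth-$m$ tree whose root has at most $c$ children and whose other internal vertices have at most $c-1$ children each has at most $c(c-1)^{m-1}$ leaves, giving $r_m(v)\le c(c-1)^{m-1}$ and the stated bound on the $m$-coloring number. Note that the difficulty your pivot device is designed to overcome is a non-issue in this argument: the degree bound never requires the branches at $u$ to witness membership in any $R_j(u)$; it only requires them to be internally disjoint, short, and terminating before $u$.
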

\begin{proof}
Consider a vertex $v\in V(G)$.  There exists a tree $T\subseteq G$ rooted in $v$ such that $R_m(v)$ is the set of leaves of $T$,
every path of $T$ starting in $v$ has length at most $m$ and all non-leaf vertices of $T$ distinct from $v$ appear after $v$ in the ordering.
Observe that every non-leaf vertex $u\in V(T)$ has degree at most $c$ in $T$, as otherwise there would exist at least $c+1$ paths in $G$ from $u$ of length at most $m$
intersecting only in $u$ and ending before $u$ (in $\{v\}\cup R_m(v)$), contradicting the assumption that the $m$-backconnectivity of $u$ is at most $c$.
We conclude that $T$ has at most $c(c-1)^{m-1}$ leaves, and thus $r_m(v)\le c(c-1)^{m-1}$.  Since this holds for every vertex $v\in V(G)$,
the claim of the lemma follows.
\end{proof}

Together with the observation of Kierstead and Yang~\cite{kierstead2003orderings}, this implies that the weak $m$-coloring number of the ordering is at most $(c(c-1)^{m-1}+1)^m$.
For a set $S\subseteq V(G)$ and $v\in S$, let $b_m(S, v)$ be the maximum number of paths from $v_i$ of length at most $m$ intersecting only in $v_i$ whose internal vertices belong to $V(G)\setminus S$ and endvertices belong to $S$.
The ordering of $V(G)$ with the smallest $m$-admissibility can be found using Algorithm~\ref{alg-adm}.
\begin{algorithm}
\begin{itemize}
\item initialize $S\coloneqq V(G)$
\item for $i=n, n-1, \ldots, 1$:
\begin{itemize}
\item choose $v_i\in S$ minimizing $p_i=b_m(S, v_i)$
\item set $S\coloneqq S\setminus\{v_i\}$
\end{itemize}
\end{itemize}
\caption{Determining $m$-admissibility}\label{alg-adm}.
\end{algorithm}
Clearly, the resulting ordering has $m$-admissibility $\max(p_1,\ldots, p_n)$, and it is easy to see that this is equal to $\adm_m(G)$: Suppose that there exists an ordering $X$ of $V(G)$ with
$m$-admissibility at most $p$, and consider an arbitrary set $S\subseteq V(G)$.  Let $v$ be the last vertex of $S$ according to the ordering $X$.  Then $b_m(S, v)\le p$, since all vertices
of $S$ are before $v$ in the ordering $X$ and the $m$-backconnectivity of $v$ in $X$ is at most $p$.  Therefore, we have $p_i\le p$ for $1\le i\le n$ in the algorithm.

A bit problematic step in the algorithm is finding the vertex $v\in S$ minimizing $b_m(S, v)$, since for $m\ge 5$, determining $b_m(S, v)$ is NP-complete in general (Itai, Perl and Shiloach~\cite{ips}).
Nevertheless, $b_m(S, v)$ can be approximated within factor of $m$, by repeatedly taking any path from $v$ to $S$ of length at most $m$ with all internal vertices in $V(G)\setminus S$ and removing its vertices distinct from
$v$ (each such removal can interrupt at most $m$ paths in the optimal solution, hence we will be able to pick at least $b_m(S, v)/m$ paths this way).  A straightforward implementation gives an $O(mn^3)$
algorithm to approximate $\adm_m(G)$ within factor of $m$.

When the $m$-admissibility of $G$ is bounded by a constant $p$, we can obtain a polynomial-time algorithm to determine $\adm_m(G)$ exactly.  To test whether $b_m(S, v)\le p$,
we simply enumerate all sets of at most $p+1$ paths of length at most $m$ starting in $v$.  A straightforward implementation gives an algorithm with time complexity
$O(n^{mp+m+2})$.  This time complexity can be improved significantly if the considered class of graphs $\GG$ has bounded expansion.  Dvo\v{r}\'ak et al.~\cite{dvorak2010deciding}
described a data structure to represent a graph in such a class and answer first-order queries for it in a constant time.
In particular, suppose that $\varphi(x)$ is a first-order formula with one free variable $x$ using a binary predicate $e$ and a unary predicate $s$.
This data structure can be used to represent a graph in $\GG$ and a subset $S$ of its vertices, so that
\begin{itemize}
\item the data structure can be initialized in linear time,
\item we can add a vertex to $S$ or remove it from $S$ in constant time, and
\item we can find in constant time a vertex $v\in V(G)$ such that $\varphi(v)$ holds, with $e$ interpreted as the adjacency in $G$
and $s$ as the membership in $S$, or decide that no such vertex exists.
\end{itemize}

For the purpose of the algorithm for $m$-admissibility, to test whether $b_m(S, v_i)\le p$, we apply the data structure for the property
$$\varphi(x)=s(x)\land \lnot\left[(\exists y_1)\ldots (\exists y_{m(p+1)}) \psi(x, y_1,\ldots, y_{m(p+1)})\right],$$
where $\psi$ is the formula describing that the subgraph induced by $\{x, y_1, \ldots, y_{m(p+1)}\}$ contains
$p+1$ paths from $x$ of length at most $m$, intersecting only in $x$, and with endvertices satisfying $s$ and internal vertices not satisfying $s$.

Using this data structure, we repeatedly find $x\in S$ such that $b_m(S, x)\le p$ and remove it from to $S$, thus obtaining an ordering of $V(G)$
with admissibility at most $p$ or determining that $\adm_m(G)>p$ in linear time.
By Zhu~\cite{zhu2009colouring}, for each class $\GG$ with bounded expansion, there exists a function $f$ such that $\adm_m(G)\le f(m)$ for each $G\in \GG$.
Therefore, we can determine the exact value of the $m$-admissibility by applying this test for $p=1,\ldots, f(m)$.

\begin{theorem}
Let $\GG$ be a class of graphs with bounded expansion and $m\ge 1$ an integer.  There exists a linear-time algorithm
that for each $G\in \GG$ determines $\adm_m(G)$ and outputs the corresponding ordering of $V(G)$.
\end{theorem}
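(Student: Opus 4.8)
The plan is to combine Algorithm~\ref{alg-adm} with the dynamic first-order data structure of Dvo\v{r}\'ak et al.~\cite{dvorak2010deciding} and the uniform bound on admissibility provided by Zhu~\cite{zhu2009colouring}. For a fixed value of $p$ I would take the first-order formula $\varphi(x)$ constructed above, which holds precisely for those $x\in S$ with $b_m(S,x)\le p$. Since $m$ and $p$ are constants, the quantifier prefix $(\exists y_1)\ldots(\exists y_{m(p+1)})$ has fixed length and $\psi$ is a fixed sentence describing the adjacencies among $x,y_1,\ldots,y_{m(p+1)}$ that encode $p+1$ internally disjoint paths of length at most $m$ together with the membership constraints expressed through $s$; hence $\varphi$ is a genuine first-order formula whose size depends only on $m$ and $p$, so the data structure applies to it.

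The core step is a linear-time test of whether $\adm_m(G)\le p$. I would initialize the data structure with $S\coloneqq V(G)$ in linear time and then repeatedly query for a vertex $x$ satisfying $\varphi(x)$, each query costing constant time. If such an $x$ is found, I remove it from $S$ in constant time and record it as the next vertex from the back of the ordering. If $\adm_m(G)\le p$, the existence argument already given for Algorithm~\ref{alg-adm} guarantees that every nonempty current set $S$ contains such a vertex, namely the last vertex of $S$ in any ordering of $m$-admissibility at most $p$; so the process runs through all $n$ iterations and the recorded sequence is an ordering of $m$-admissibility at most $p$. If instead at some stage no vertex satisfies $\varphi$, then every vertex of $S$ has $b_m(S,\cdot)>p$, which certifies $\adm_m(G)>p$. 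As each of the $n$ iterations costs constant time, the whole test runs in $O(n)$.

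Finally I would iterate the test over $p=1,2,\ldots$\,. By Zhu~\cite{zhu2009colouring}, for the fixed bounded-expansion class $\GG$ there is a constant $f(m)$ with $\adm_m(G)\le f(m)$ for every $G\in\GG$, so the least $p$ for which the test succeeds is at most $f(m)$ and, by the correctness of the test, equals $\adm_m(G)$; the ordering produced by that successful run is the required one. Running at most $f(m)$ tests, each linear, gives total running time $O(f(m)\cdot n)=O(n)$ because $f(m)$ is a constant for fixed $m$.

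The step I expect to demand the most care is checking that the data structure of~\cite{dvorak2010deciding} really applies: one must verify that ``$p+1$ paths intersecting only in $x$'' is expressible as a fixed first-order sentence over $e$ and $s$ without counting beyond the bound $m(p+1)$, and that the deletions performed by Algorithm~\ref{alg-adm} are exactly the add/remove operations on $S$ that the structure supports in constant time. Granting these points, the remaining bookkeeping---maintaining the ordering as vertices are removed and incrementing $p$---is routine.
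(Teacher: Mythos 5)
Your proposal is correct and follows essentially the same route as the paper: run the greedy elimination of Algorithm~\ref{alg-adm} as a sequence of constant-time queries for the formula $\varphi(x)$ in the data structure of~\cite{dvorak2010deciding}, justify that the test never stalls when $\adm_m(G)\le p$ via the last-vertex exchange argument, and iterate over $p$ up to the uniform bound $f(m)$ from~\cite{zhu2009colouring}. The points you flag as delicate (first-order expressibility of the $p+1$ disjoint short paths with the bounded quantifier prefix, and the constant-time membership updates to $S$) are exactly the ones the paper relies on, and they hold as you describe.
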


By combining this algorithm with Theorem~\ref{thm-main}, we obtain the following result.

\begin{theorem}\label{thm-algbexp}
Let $\GG$ be a class of graphs with bounded expansion and $k\ge 1$ a constant.  There exists an algorithm that for each $G\in \GG$ returns
a $k$-dominating set $D$ and a $(2k+1)$-independent set $A$ such that $|D|=O(|A|)$.  The algorithm runs in time $O(|V(G)|)$.
\end{theorem}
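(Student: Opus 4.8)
The plan is to assemble Theorem~\ref{thm-main} with the linear-time admissibility algorithm established immediately above, taking $m=2k+1$.  Since $k$ is a constant, $m$ is constant as well, and the requirement $1\le m\le 2k+1$ of Theorem~\ref{thm-main} holds with equality.

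First I would run the preceding linear-time algorithm to produce an ordering of $V(G)$ achieving $\adm_m(G)$.  Because $\GG$ has bounded expansion, the result of Zhu cited above guarantees a function $f$ with $\adm_m(G)\le f(m)$ for every $G\in\GG$; as $m$ is fixed, the quantity $p=f(m)$ is a constant depending only on $\GG$ and $k$.  The next step is to certify that this ordering has bounded weak $m$-coloring number.  By Lemma~\ref{lemma-admrel} the $m$-coloring number of the ordering is at most $p(p-1)^{m-1}+1$, and by the inequality of Kierstead and Yang ($\wcol_m\le\col_m^m$) its weak $m$-coloring number is at most $c=(p(p-1)^{m-1}+1)^m$, again a constant.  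Since the weak $m$-coloring number of an ordering equals $\max_v(1+q_m(v))$, this bound yields $q_m(v)\le c-1<c$ for every vertex $v$, which is exactly the hypothesis of the algorithmic part of Theorem~\ref{thm-main}.

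Feeding this ordering into Theorem~\ref{thm-main} then returns a $k$-dominating set $D$ and an $m$-independent (that is, $(2k+1)$-independent) set $A$ with $|D|\le c^2|A|$, in time $O(c^2\max(k,m)|V(G)|)$.  Since $c$, $k$, and $m$ are all constants once $\GG$ and $k$ are fixed, this running time is $O(|V(G)|)$ and $|D|=O(|A|)$, as claimed.  The ordering itself was produced in linear time, so the whole algorithm is linear.

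There is little genuine difficulty here: the statement is an assembly of parts already proved, and the main thing to watch is the bookkeeping of constants.  The one point requiring care is that the admissibility algorithm does not directly bound the weak $m$-coloring number, which is what Theorem~\ref{thm-main} consumes; the bridge is the two-step chain through Lemma~\ref{lemma-admrel} and the Kierstead and Yang inequality, and one must confirm that every bound depends only on $\adm_m(G)$ and $m$ (hence only on $\GG$ and $k$) and not on $|V(G)|$, so that the final running time and approximation factor are genuinely constant.
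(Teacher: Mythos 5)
Your proposal is correct and matches the paper's (essentially one-line) argument: the paper proves this theorem precisely by feeding the ordering from the linear-time $m$-admissibility algorithm, with $m=2k+1$, into Theorem~\ref{thm-main}, using Lemma~\ref{lemma-admrel} and the Kierstead--Yang inequality to convert the constant admissibility bound into a constant bound on $q_m(v)$. Your bookkeeping of the constants is the same chain the paper uses, so there is nothing to add.
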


\section{Lower bound}\label{sec-lower}

Let us now explore the limits for the possible extensions of Theorem~\ref{thm-main}.
For $n\ge 3$, let $G'_n=\sd_{2k-1}(K_n)$, let $X$ be the set of the middle vertices of the paths corresponding to the
edges of $K_n$ in $G'_n$ and let $Y$ be the set of vertices of $G'_n$ of degree $n-1$.  Let $G_n$ be the graph obtained from $G'_n$ by
adding a new vertex $v$ adjacent to all the vertices of $X$.

The distance between any two vertices of $V(G_n)\setminus Y$ is at most $2k$, since all these vertices are in distance at most $k$ from $v$.
Furthermore, the distance between any two vertices of $Y$ is most $2k$, since they are joined by a path of length $2k$ corresponding
to an edge of $K_n$.  Therefore, $\alpha_{2k}(G_n)\le 2$.  On the other hand, for any $w\in V(G_n)\setminus X$, there is at most one vertex
of $Y$ whose distance from $w$ is at most $k$, and each vertex of $X$ has distance at most $k$ from exactly two vertices of $Y$.
Therefore, $\dom_k(G_n)\ge n/2$.  Therefore, $k$-domination number cannot be bounded by a function of $2k$-independence number
on any class of graphs that contains $\{G_i : i\ge 3\}$.

Let us consider the following ordering of the vertices of $G_n$: the first vertex is $v$, followed by $Y$ in an arbitrary order, followed
by the rest of vertices of $G_n$ in an arbitrary order.  Since the distance between any two vertices of $Y$ is $2k$, we have
$q_{2k-1}(w)\le 1$ for $w\in Y$, and similarly $q_{2k-1}(w)\le 2k+1$ for $w\in V(G_n)\setminus Y$.  Therefore, $\wcol_{2k-1}(G_n)\le 2k+2$ for every $n\ge 3$.
It follows that at least in the case that $m=2k$, it is not sufficient to restrict $\wcol_{2k-1}(G)$ in Theorem~\ref{thm-main}.

Another possible extension, bounding $\dom_k(G)$ by a function of $\alpha_{2k+2}(G)$, is impossible even for trees~\cite{bohme2003domination},
as the graph $\sd_k(K_{1,n})$ demonstrates.

\section*{Acknowledgements}

I would like to thank Bojan Mohar for bringing the problem to my attention and for useful discussions regarding it.

\bibliographystyle{siam}
\bibliography{apxdomin}

\end{document}